\def \RR {{\mathbb R}}
\def \NN {{\mathbb N}}
\def \eps {\varepsilon}
\newtheorem{propo}{Proposition}
\newtheorem{lem}{\bf{Lemma}}
\newtheorem{theo}{Theorem}
\newtheorem{coro}{Corollary}
\newtheorem{rem}{Remark}
\newtheorem{hypo}{\bf{Assumption}}
\begin{document}
\title{\bf Gradient-like observer design on the Special Euclidean group SE(3) with system outputs on the real projective space}

\author{Minh-Duc Hua, Tarek Hamel, Robert Mahony, Jochen Trumpf
\thanks{Minh-Duc Hua is with ISIR UPMC-CNRS (Institute for Intelligent Systems and Robotics), Paris, France. E-mail: {\tt\footnotesize $hua@isir.upmc.fr$}.}
\thanks{Tarek Hamel is with I3S UNS-CNRS, Sophia Antipolis, France. E-mail: {\tt\footnotesize $thamel@i3s.unice.fr$}.}%
\thanks{Robert Mahony and Jochen Trumpf are with the Research School of Engineering,
Australian National University, Canberra, Australia. E-mails: {\tt\footnotesize $Robert.Mahony(Jochen.Trumpf)@anu.edu.au$}.}
}

\maketitle

\pagestyle{empty}
\thispagestyle{empty}

\begin{abstract}
A nonlinear observer on the Special Euclidean group $\mathrm{SE(3)}$ for full pose estimation, that takes the system outputs on the real projective space directly as inputs, is proposed. The observer derivation is based on a recent advanced theory on nonlinear observer design. A key advantage with respect to existing pose observers on $\mathrm{SE(3)}$ is that we can now incorporate in a unique observer different types of measurements such as vectorial measurements of known inertial vectors and position measurements of known feature points. The proposed observer is extended allowing for the compensation of unknown constant bias present in the velocity measurements. Rigorous stability analyses are equally provided. Excellent performance of the proposed observers are shown by means of simulations.
\end{abstract}

\section{Introduction}\label{sec:introduction}

The development of a robust and reliable estimator of the pose (i.e. position and attitude) of a rigid body is a key requirement for robust and high performance control of robotic vehicles. Pose estimation is a highly nonlinear problem in which the sensors normally utilized are prone to non-Gaussian noise \cite{Baldwin}. Classical approaches for state estimation are based on nonlinear filtering techniques such as extended Kalman filters, unscented Kalman filters or particle filters. However, nonlinear observers have become an alternative to these classical techniques, starting with the work of Salcudean \cite{Salcudean91} for attitude estimation and subsequent contributions over the last two decades \cite{nf99,vf01,rg03,ts03,tmrm07,mhp08,bmtICRA09,vcso10, ms10cep, rt11CDC,hamelCDC11,grip12,huaCDC09,hua14,huaCDC14,eudes2013,trumpf12}. Early nonlinear attitude observers have been developed on the basis of Lyapunov analysis. Recently, the attitude estimation problem has motivated the development of theories on invariant observers for systems endowed with symmetry properties \cite{ar03,bmr08, bmr09,mhp08,ltm10,trumpf12,MahonyNOLCOS13,khosravian15}. For instance, complementary nonlinear attitude observers exploiting the underlying Lie group structure of the Special Orthogonal group $\mathrm{SO(3)}$ are derived in \cite{mhp08} with proofs of almost global stability of the error system. A symmetry-preserving nonlinear observer design based on the Cartan moving-frame method is proposed in \cite{bmr08,bmr09}, which is locally valid for arbitrary Lie groups. A gradient-like observer design technique for invariant systems on Lie groups is proposed in \cite{ltm10}, leading to almost global convergence provided that a non-degenerate Morse-Bott cost function is used. More recently, an observer design method directly on the homogeneous output space for the kinematics of mechanical systems is proposed in \cite{MahonyNOLCOS13}, leading to autonomous error evolution and strong convergence properties. Finally, \cite{khosravian15} extends the observer design methodology proposed in \cite{MahonyNOLCOS13} in order to deal with the case where the measurement of system input is corrupted by an unknown constant bias.

Full pose observer design, although less studied than attitude observer design, has recently attracted  more attention \cite{vf01,rg03,Baldwin,bmthcECC07,bmtICRA09,vcso10,hua11SE3}. For instance, observers designed directly on $\mathrm{SE(3)}$ have been proposed using both full state feedback \cite{bmthcECC07} or bearing measurements of known landmarks \cite{bmtICRA09}. An observer on $\mathrm{SO(3)} \times \RR^3$ is proposed in \cite{vcso10}, using full range and bearing measurements of known landmarks and achieving almost global asymptotic stability. In a prior work by the authors \cite{hua11SE3}, a nonlinear observer on $\mathrm{SE(3)}$ is proposed using directly position measurements in the body-fixed frame of known inertial feature points or landmarks, with motivation strongly related to robotic vision applications using either stereo camera or Kinect sensor. The observer derivation is based on the gradient-like observer design technique proposed in \cite{ltm10}, and the almost global asymptotic stability of the error system is proved by means of Lyapunov analysis.

In this paper, we consider the question of deriving a nonlinear observer on $\mathrm{SE(3)}$ for full pose estimation that takes the system outputs on the real projective space $\mathbb{RP}^3$ directly as inputs. A key advance on our prior work \cite{hua11SE3} is the possibility of incorporating ``naturally'' in a sole observer both vectorial measurements (provided e.g. by magnetometers or inclinometers) and position measurements of known inertial feature points (provided e.g. by stereo camera). In addition, sharing the same robustness property with the observer proposed in \cite{hua11SE3}, the  algorithm here proposed is also well-posed even when there is insufficient data for full pose reconstruction using algebraic techniques. In such situations, the proposed observer continues to operate, incorporating what information is available and relying on propagation of prior estimates where necessary. Finally, as a complementary contribution, a modified version of the basic observer is proposed so as to deal with the case where bias is present in the velocity measurements.

The remainder of this paper is organised as follows. Section \ref{sec:preliminary} formally introduces the problem of pose estimation on $\mathrm{SE(3)}$ along with the notation used. In Section III, based on a recent advanced theory for nonlinear observer design directly on the output space \cite{MahonyNOLCOS13}, a nonlinear observer on $\mathrm{SE(3)}$ is proposed using direct body-fixed measurements of known inertial elements of the real projective space $\mathbb{RP}^3$ and the knowledge of the group velocity. Stability analysis is also provided in this section. Then, in Section \ref{sec:observerDesignBias} the proposed basic observer is extended using Lyapunov theory in order to cope with the case where the measurement data of the group velocity are corrupted by an unknown constant bias. In Section \ref{sec:simulation}, the performance of the proposed observers are validated by means of simulation. Finally, concluding remarks are given in Section \ref{sec:conclusions}.

\section{Preliminary material}\label{sec:preliminary}
\subsection{Notation}

Let $\{{\cal A}\}$ and $\{{\cal B}\}$ denote an inertial frame and a body-fixed frame attached to a vehicle moving in 3D-space, respectively. The vehicle's position, expressed in the frame $\{{\cal A}\}$, is denoted as $p\in \RR^3$. The attitude of the vehicle is represented by a rotation matrix $R \in \mathrm{SO(3)}$ of the frame $\{{\cal B}\}$ relative to the frame $\{{\cal A}\}$. Let $V \in \RR^3$ and $\Omega \in \RR^3$ denote the vehicle's translational and angular velocities, both expressed in $\{{\cal B}\}$.

In this paper, we consider the problem of estimating the vehicle's pose, which can be represented by an element of the Special Euclidean group $\mathrm{SE(3)}$ given by the matrix
\begin{equation}\label{defpose}
X := \begin{bmatrix} R & p\\ 0 & 1\end{bmatrix} \in \mathrm{SE(3)} \subset \RR^{4\times 4}.
\end{equation}
This representation, known as homogeneous coordinates, preserves the group structure of $\mathrm{SE(3)}$ with the $\mathrm{GL(4)}$ operation of matrix multiplication, i.e. $X_1 X_2 \in \mathrm{SE(3)}$, $\forall X_1,X_2 \in \mathrm{SE(3)}$. Now let us recall some common definitions and notation.

\noindent $\bullet$ The Lie-algebra $\mathfrak{se}(3)$ of the group $\text{SE(3)}$ is defined as
\[
\mathfrak{se}(3) := \left\{A \in \RR^{4\times 4} \mid \exists\, \Omega, V \in \RR^3\, : \,A = \begin{bmatrix} \Omega_\times & V\\ 0 & 0\end{bmatrix} \right\},
\]
with $\Omega_\times$ denoting the skew-symmetric matrix associated with the cross product by $\Omega$,  i.e. $\Omega_\times v = \Omega \times v$, $\forall v \in \RR^3$.
The adjoint operator is a mapping $Ad: \mathrm{SE(3)} \times \mathfrak{se}(3) \rightarrow \mathfrak{se}(3)$ defined as
$Ad_{X}A := X A X^{-1}$, with $X\in \mathrm{SE(3)}, A \in \mathfrak{se}(3)$.

\noindent $\bullet$ For any two matrices $M_1, M_2 \in \RR^{n\times n}$, the Euclidean matrix inner product and Frobenius norm are defined as
\[
\langle M_1, M_2 \rangle := {\mathrm{tr}}(M_1^\top M_2),\,\,\, \| M_1 \| := \sqrt{\langle M_1, M_1 \rangle}.
\]
Let $\mathbf{P}_a(M)$, $\forall M \in \RR^{n\times n}$, denote the anti-symmetric part of $M$, i.e. $\mathbf{P}_a(M) := (M-M^\top)/2$.
Let $\mathbf{P} : \RR^{4\times 4} \rightarrow \mathfrak{se}(3)$ denote the unique orthogonal projection of $\RR^{4\times 4}$ onto $\mathfrak{se}(3)$ with respect to the inner product $\langle \cdot, \cdot \rangle$, i.e. $\forall A \in \mathfrak{se}(3)$, $M \in \RR^{4\times 4}$, one has $$\langle A, M \rangle = \langle A, \mathbf{P}(M) \rangle = \langle \mathbf{P}(M),A \rangle\,.$$
It is verified that for all $M_1 \in \RR^{3\times 3}, m_{2,3} \in \RR^3, m_4 \in \RR$,
\begin{equation}\label{projection}
\mathbf{P}\left(\begin{bmatrix} M_1 & m_2\\ m_3^\top & m_4\end{bmatrix}\right) := \begin{bmatrix} \mathbf{P}_a(M_1) & m_2\\ 0 & 0 \end{bmatrix}.
\end{equation}

\noindent $\bullet$ For all $X \in \mathrm{SE(3)}$, $A_1, A_2 \in \mathfrak{se}(3)$, the following equation defines a {{\em right-invariant Riemannian metric}} $\langle\cdot,\cdot\rangle_X$:
\[
\langle A_1 X, A_2 X \rangle_X := \langle A_1, A_2 \rangle .
\]

\noindent $\bullet$ For any $x \in \RR^4$ (or $\in \mathbb{RP}^3$), the notation $\underline{x} \in \RR^3$ denotes the vector of first three components of $x$ and the notation $x_i$ stands for the i-th component of $x$. Thus, it can be written as $x = [\underline{x} \quad x_4]^\top$.

\subsection{System equations and measurements}
The vehicle's pose $X\in \text{SE(3)}$, defined by \eqref{defpose}, satisfies the kinematic equation
\begin{equation}\label{system}
\dot X = F(X,A) := X A,
\end{equation}
with group velocity $A \in \mathfrak{se}(3)$. System \eqref{system} is {\em left invariant} in the sense that it preserves the (Lie group) invariance properties with respect to constant translation and constant rotation of the body-fixed frame $\{{\cal B}\}$ $X \mapsto X_0 X$.

Assume that the group velocity $A$ (i.e. $\Omega$ and $V$) is bounded, continuous, and available to measurement. Moreover, $N \in \NN^+$ constant elements of the real projective space $\mathring{y}_i \in \mathbb{RP}^3$ ($i=1,\cdots,N$), known in the inertial frame $\{\mathcal{A}\}$, are assumed to be measured in the body-fixed frame $\{\mathcal{B}\}$ as
\begin{equation}\label{measurement}
y_i = h (X, \mathring{y}_i) := \frac{X^{-1} \mathring{y}_i}{|X^{-1} \mathring{y}_i|} \in \mathbb{RP}^3, \quad i=1,\cdots,N.
\end{equation}
Note that the Lie group action $h:  \mathrm{SE(3)}\times \mathbb{RP}^3 \rightarrow \mathbb{RP}^3$ is a  {\em right group action} in the sense that for all $X_1,X_2 \in \mathrm{SE(3)}$ and $y\in \mathbb{RP}^3$, one has $h(X_2, h(X_1,y)) = h(X_1 X_2, y)$. For later use, define
\begin{equation}\label{defy}
Y := (y_1,\cdots, y_N),\quad \mathring Y := (\mathring y_1,\cdots, \mathring y_N).
\end{equation}

\begin{rem}\label{remMeasurement}
Interestingly, by considering the measurement data in the real projective space $\mathbb{RP}^3$, we are able to combine in a sole pose observer various types of measurements coming from sensors of different nature. For instance, from a stereo camera or a Kinect sensor we can obtain a matching of $N_1 \in \NN^+$ feature points whose position coordinates are known in both the inertial reference frame $\{\mathcal{A}\}$ and the current body-fixed frame $\{\mathcal{B}\}$, i.e. one has
\[
p_i = R^\top (\mathring{p}_i - p), \quad i=1,\cdots, N_1,
\]
with $\mathring{p}_i, {p}_i \in \RR^3$ the position coordinates of the feature points expressed in the frames $\{\mathcal{A}\}$ and $\{\mathcal{B}\}$, respectively. Then, the following simple transformations:
\[
\begin{split}
&\underline{\mathring{y}}_i := \frac{\mathring{p}_i}{\sqrt{|\mathring{p}_i|^2+1}}, \quad \mathring{y}_{i,4} := \frac{1}{\sqrt{|\mathring{p}_i|^2+1}}, \\
&\underline{y}_i := \frac{p_i}{\sqrt{|p_i|^2+1}}, \quad y_{i,4} := \frac{1}{\sqrt{|{p}_i|^2+1}} ,
\end{split}
\]
yield the following relations in the form \eqref{measurement}:
\[
y_i = \frac{X^{-1} \mathring y_i}{|X^{-1} \mathring y_i|} = h(X,\mathring y_i), \quad i=1,\cdots,N_1,
\]
with $\mathring y_i =[\underline{\mathring y}_i \quad \mathring y_{i,4}]^\top \in \mathbb{RP}^3$ and $y_i =[\underline{y}_i \quad y_{i,4}]^\top \in \mathbb{RP}^3$. Such a transformation provides a ``natural'' scaling of the position measurements of known inertial feature points so that the measurement of a very far feature point will act closely to a vectorial measurement. On the other hand, assume also that the vehicle is equipped with $N_2 \in \NN^+$ vectorial sensors (e.g. magnetometer or inclinometer) so as to provide the measurements $v_j \in \RR^3$ in the body-fixed frame $\{\mathcal{B}\}$ of $N_2$ Euclidean vectors (given for example by the geomagnetic field or the gravity field) whose coordinates $\mathring v_j \in \RR^3$ in the inertial frame $\{\mathcal{A}\}$ are known. Then, one verifies that $v_j = R^\top \mathring v_j$ and deduces the following relations in the form \eqref{measurement}:
\[
y_j = \frac{X^{-1} \mathring y_j}{|X^{-1} \mathring y_j|} = h(X,\mathring y_j), \quad j=N_1+1,\cdots, N_1+N_2,
\]
with $\mathring y_j =[\underline{\mathring y}_j \quad 0]^\top \in \mathbb{RP}^3$, $y_j =[\underline{y}_j \quad 0]^\top \in \mathbb{RP}^3$, $\underline{\mathring{y}}_j := \frac{\mathring{v}_{j}}{|\mathring{v}_{j}|}$ and $\underline{y}_j := \frac{v_{j}}{|v_{j}|}$.
\end{rem}

We verify that $\mathrm{SE(3)}$ is a symmetry group with group actions $\phi: \mathrm{SE(3)} \times \mathrm{SE(3)} \longrightarrow \mathrm{SE(3)}$, $\psi: \mathrm{SE(3)}\times \mathfrak{se}(3) \longrightarrow \mathfrak{se}(3)$ and $\rho : \mathrm{SE(3)}\times \mathbb{RP}^3 \longrightarrow \mathbb{RP}^3$ defined by
\[
\begin{array}{lcl}
\phi(Q,X) &:=& X Q,\\
\psi(Q,A) &:=& Ad_{Q^{-1}} A = Q^{-1} A Q,\\
\rho(Q,y) &:=& \frac{Q^{-1} y}{|Q^{-1} y|}.
\end{array}
\]
Indeed, it is straightforward to verify that $\phi$, $\psi$, and $\rho$ are {\em right group actions} in the sense that
$\phi(Q_2, \phi(Q_1,X)) = \phi(Q_1Q_2,X)$, $\psi(Q_2, \psi(Q_1,A)) = \psi(Q_1Q_2,A)$, and $\rho(Q_2, \rho(Q_1,y)) = \rho(Q_1Q_2,y)$, for all $Q_1,Q_2,X \in \mathrm{SE(3)}$, $A\in \mathfrak{se}(3)$, and $y\in \mathbb{RP}^3$. Clearly, one has
\[
\rho(Q, h(X,\mathring{y}_i)) = \frac{Q^{-1} \frac{X^{-1} \mathring y_i}{|X^{-1} \mathring{y}_i|}}{\big|Q^{-1} \frac{X^{-1} y_i}{|X^{-1} \mathring{y}_i|}\big|}=h(\phi(Q,X),\mathring{y}_i),
\]
\[
\begin{array}{lcl}
d\phi_Q(X)[F(X,A)] &=& XA  Q = (XQ) (Q^{-1} AQ) \\
&=& F(\phi(Q,X), \psi(Q, A)).
\end{array}
\]
Thus, the kinematics \eqref{system} are {\em right equivariant} in the sense of \cite[Def. 2]{MahonyNOLCOS13}. This is a condition allowing us to apply the theory proposed in \cite{MahonyNOLCOS13} for nonlinear observer design directly on the output space. Note also that the system under consideration belongs to type I systems (see \cite{MahonyNOLCOS13}) where both the velocity sensors and the state sensors are attached to the body-fixed frame.

\section{Gradient-like observer design} \label{sec:observerDesign}
Denote by $\hat X(t)  \in  \mathrm{SE(3)}$ the estimate of the pose $X(t)$ and denote by $\hat R$ and $\hat p$ the estimates of $R$ and $p$, respectively. One has $\hat X = \begin{bmatrix} \hat R & \hat p\\ 0 & 1\end{bmatrix}$.
 Define the group error
\begin{equation}\label{righterror}
E(\hat X,X) := \hat X X^{-1} \in \mathrm{SE(3)},
\end{equation}
which is {\em right invariant} in the sense that for all $\hat X, X, Q \in \mathrm{SE(3)}$, one has $E(\hat X Q, XQ) = E(\hat X, X)$. From now on, without confusion the shortened notation $E$ is used for $E(\hat X,X)$. The group error $E$ converges to the identity element $I_4 \in \mathrm{SE(3)}$ iif $\hat X$ converges to $X$.
For later use, define also the output errors $e_i \in \mathbb{RP}^3$, with $i=1,\cdots, N$, as
\begin{equation}\label{defei}
e_i := h(\hat X^{-1}, y_i) = \frac{\hat X y_i}{|\hat X y_i|} = \frac{E \mathring y_i}{|E \mathring y_i|}.
\end{equation}
Note that $e_i$ $(i=1,\cdots, N)$ can be viewed as the estimates of $\mathring y_i$, since they converge to $\mathring y_i$ when $E$ converges to $I_4$. Note also that $e_i$ are computable by the observer.

We now proceed the observer design. As proposed by \cite{MahonyNOLCOS13}, the observer takes the form
\begin{equation}\label{systemObs}
\dot {\hat X} = \hat X A - \Delta(\hat X, Y) \hat X, \quad \hat X(0) \in \mathrm{SE(3)} ,
\end{equation}
where $\Delta(\hat X, Y) \in \mathfrak{se}(3)$, which is a matrix-valued function of $\hat X$ and $Y$ with $Y$ defined by \eqref{defy}, is the innovation term to be designed hereafter and must be {\em right equivariant} in the sense that $\forall Q \in \mathrm{SE(3)}$:
\[
\Delta(\phi(Q,\hat X), \rho(Q, Y)) = \Delta(\hat X, Y),
\]
with $\rho(Q,Y):= (\rho(Q, y_1), \cdots, \rho(Q, y_N))$.
Interestingly, if the innovation term $\Delta(\hat X, Y)$ is right equivariant, the dynamics of the group error $E$ are autonomous \cite[Th. 1]{MahonyNOLCOS13}:
\begin{equation}\label{:eq:ErrorObserverGeneral}
\dot E = -\Delta(E, \mathring{Y}) E.
\end{equation}

In order to determine the innovation term $\Delta(\hat X, Y)$, the following cost function is considered:
\begin{equation}\label{:eq:aggregateCost}
\begin{split}
  \!\!\!\!{\mathcal{C}} \colon &\mathrm{SE(3)} \!\times\! (\mathbb{RP}^3 \times \cdots \times \mathbb{RP}^3) \!\longrightarrow \mathbb{R}^+,\\
  &\quad\quad\,(\hat X,Y)\,\,\,\,\mapsto {\mathcal{C}}(\hat X,Y) :=  \!\sum_{i=1}^N\frac{k_i}{2}\left|\frac{\hat X y_i}{|\hat X y_i|} - \mathring{y}_i\right|^2
\end{split}
\end{equation}
with positive constant parameters $k_i$.
It is easily verified that the cost function ${\mathcal{C}}(\hat X,Y)$ is {\em right invariant} in the sense that ${\mathcal{C}}(\phi(Q,\hat X), \rho(Q, Y)) = {\mathcal{C}}(\hat X, Y)$ for all $Q \in \mathrm{SE(3)}$. From here, the innovation term $\Delta(\hat X, Y)$ is computed as \cite[Eq. (40)]{MahonyNOLCOS13}:
\begin{equation}\label{:eq:innoGradGeneral}
\Delta(\hat X,Y) := (\mathrm{grad}_1 {\mathcal{C}}(\hat X,Y) )\hat X^{-1},
\end{equation}
where $\mathrm{grad}_1$ is the {\em gradient} in the first variable, using a right-invariant Riemannian metric on $\mathrm{SE}(3)$.

\begin{lem}\label{:lem:innovation}
The innovation term $\Delta(\hat X,Y)$ defined by \eqref{:eq:innoGradGeneral} is right equivariant and explicitly given by
\begin{equation}\label{inno:gradRightmetric}
\Delta(\hat X, Y)  \!=\! -\mathbf{P}\!\left(\sum_{i=1}^N k_i \left(I_4 - e_i e_i^\top \right)\mathring y_i e_i^\top \right),
\end{equation}
with $e_i$ considered as functions of $\hat X$ and $y_i$, i.e. $e_i = \frac{\hat X y_i}{|\hat X y_i|}$.
\end{lem}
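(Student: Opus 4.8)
The plan is to unwind the definition \eqref{:eq:innoGradGeneral} of $\Delta$ by exploiting the right-invariant metric, reducing the claim to a single directional-derivative computation followed by an orthogonal projection. Since the metric satisfies $\langle A_1\hat X, A_2\hat X\rangle_{\hat X} = \langle A_1, A_2\rangle$, every tangent vector at $\hat X$ can be written as $A\hat X$ with $A\in\mathfrak{se}(3)$; writing $\mathrm{grad}_1\mathcal{C}(\hat X,Y) = \Delta\hat X$ then gives $\Delta = (\mathrm{grad}_1\mathcal{C})\hat X^{-1}\in\mathfrak{se}(3)$ directly. The defining property of the gradient states that $\Delta$ is the unique element of $\mathfrak{se}(3)$ with $\langle \Delta, A\rangle = D_1\mathcal{C}(\hat X,Y)[A\hat X]$ for all $A\in\mathfrak{se}(3)$, where $D_1$ denotes differentiation in the first argument. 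So the whole task reduces to evaluating this directional derivative and recognising the resulting linear functional of $A$.

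First I would differentiate along the curve $s\mapsto \exp(sA)\hat X$, whose velocity at $s=0$ is $A\hat X$. The only nontrivial piece is the derivative of the normalised vector $e_i(s) = \hat X(s)y_i/|\hat X(s)y_i|$: differentiating the quotient and using $\frac{d}{ds}\big|_{s=0}\hat X(s)y_i = A\hat X y_i$ yields $\frac{d}{ds}\big|_{s=0}e_i = (I_4 - e_ie_i^\top)A e_i$, i.e. the projection onto the tangent space of the sphere at $e_i$. Substituting into $\mathcal{C} = \sum_i \frac{k_i}{2}|e_i - \mathring y_i|^2$ gives $D_1\mathcal{C}[A\hat X] = \sum_i k_i (e_i - \mathring y_i)^\top (I_4 - e_ie_i^\top)A e_i$. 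Here I would invoke $|e_i| = 1$, so that $e_i^\top(I_4 - e_ie_i^\top) = 0$ and the bracket collapses to $-\mathring y_i^\top(I_4 - e_ie_i^\top)A e_i$. Rewriting each scalar as a trace via the cyclic property identifies $D_1\mathcal{C}[A\hat X] = \mathrm{tr}(M^\top A) = \langle M, A\rangle$ with the ambient matrix $M := -\sum_i k_i (I_4 - e_ie_i^\top)\mathring y_i e_i^\top\in\RR^{4\times 4}$.

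The step I expect to be the crux is that $M$ is a generic $4\times 4$ matrix and need not lie in $\mathfrak{se}(3)$, whereas $\Delta$ must. This is exactly what the orthogonal projection $\mathbf{P}$ resolves: by its defining property $\langle A, M\rangle = \langle A, \mathbf{P}(M)\rangle$ for all $A\in\mathfrak{se}(3)$, hence $\langle \Delta, A\rangle = \langle \mathbf{P}(M), A\rangle$ for all such $A$, and since both $\Delta$ and $\mathbf{P}(M)$ lie in $\mathfrak{se}(3)$ this forces $\Delta = \mathbf{P}(M)$, which is precisely \eqref{inno:gradRightmetric} after using linearity of $\mathbf{P}$. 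Finally, for right equivariance I would argue directly from this explicit formula: under $(\hat X, y_i)\mapsto(\phi(Q,\hat X),\rho(Q,y_i)) = (\hat X Q,\, Q^{-1}y_i/|Q^{-1}y_i|)$ the product $\hat X Q\,\rho(Q,y_i) = \hat X y_i/|Q^{-1}y_i|$ is a positive multiple of $\hat X y_i$, so each $e_i$ is invariant; since the reference elements $\mathring y_i$ do not transform, every term of \eqref{inno:gradRightmetric} — and hence $\Delta$ — is unchanged, giving $\Delta(\phi(Q,\hat X),\rho(Q,Y)) = \Delta(\hat X,Y)$.
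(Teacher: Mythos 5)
Your proposal is correct and follows essentially the same route as the paper's proof: reduce to $\langle \Delta, U\rangle = \mathcal{D}_1\mathcal{C}(\hat X,Y)[U\hat X]$ via the right-invariant metric, compute the directional derivative of the normalised outputs, use $|e_i|=1$ to replace $(e_i-\mathring y_i)$ by $-\mathring y_i$ under the projector $(I_4-e_ie_i^\top)$, and finish with the defining property of $\mathbf{P}$. The only difference is that you spell out the equivariance check (via $\hat XQ\,\rho(Q,y_i)$ being a positive multiple of $\hat Xy_i$), which the paper dismisses as straightforward.
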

\begin{proof}
The proof for $\Delta(\hat X,Y)$ given by \eqref{inno:gradRightmetric} to be right equivariant is straightforward. Now, using standard rules for transformations of Riemannian gradients and the fact that the Riemannian metric is right invariant, one obtains
\begin{equation}\label{:eq:GradDerivation1}
\begin{split}
\mathcal{D}_1{\mathcal{C}}(\hat X,Y) [U \hat X]&=\langle\mathrm{grad}_1 {\mathcal{C}}(\hat X,Y), U \hat X \rangle_X \\
&= \langle \mathrm{grad}_1 {\mathcal{C}}(\hat X,Y) \hat X^{-1} \hat X, U\hat X \rangle_X\\
&= \langle \mathrm{grad}_1 {\mathcal{C}}(\hat X,Y) \hat X^{-1} , U \rangle\\
&= \langle \Delta(\hat X,Y)  , U \rangle,
\end{split}
\end{equation}
with some $U \in \mathfrak{se}(3)$. On the other hand, using \eqref{:eq:aggregateCost} one deduces
\begin{equation}\label{:eq:GradDerivation2}
\begin{array}{ll}
&\!\!\!\!\!\!\!\!\!\mathcal{D}_1{\mathcal{C}}(\hat X,Y) [U \hat X] = d_1{\mathcal{C}}(\hat X,Y)[U \hat X] \\[1ex]
& = \sum_{i=1}^N k_i \left(\frac{\hat X y_i}{|\hat X y_i|} - \mathring y_i\right)^{\!\!\top} \!\!
\left(I_4 - \frac{(\hat X y_i)(\hat X y_i)^\top}{|\hat X y_i|^2}\right) \frac{(U\hat X)y_i}{|\hat X y_i|}\\[1ex]
& = \sum_{i=1}^N k_i(e_i -\mathring y_i)^\top (I_4 -e_ie_i^\top) U e_i\\[1ex]
& = \mathrm{tr}\left(\sum_{i=1}^N k_i (I_4 -e_ie_i^\top) (e_i -\mathring y_i)e_i^\top  U^\top \right) \\[1ex]
& = \left \langle  -\sum_{i=1}^N k_i (I_4 -e_ie_i^\top) \mathring y_i e_i ^\top, U \right\rangle\\[1ex]
& = \left \langle  -\mathbf{P}\!\left(\sum_{i=1}^N k_i (I_4 -e_ie_i^\top) \mathring y_i e_i ^\top\right)\!,\, U \right\rangle.
\end{array}
\end{equation}
Finally, the expression of $\Delta(\hat X,Y)$ given by \eqref{inno:gradRightmetric} is directly obtained from \eqref{:eq:GradDerivation1} and  \eqref{:eq:GradDerivation2}.
\end{proof}

\vspace{.2cm}
Using the definition \eqref{projection} of the projection $\mathbf{P}(\cdot)$, the innovation term $\Delta(\hat X,Y)$ given by \eqref{inno:gradRightmetric} can be rewritten in matrix form as follows:
\begin{equation}\label{brokenInno}
\begin{split}
& \!\!\!\Delta(\hat X, Y) \\
& \!\!\!\!=\!\! \begin{bmatrix}\displaystyle -\frac{1}{2} \sum_{i=1}^N k_i({\underline{e}}_i \times \underline{\mathring y}_i)_{\times} & \displaystyle \sum_{i=1}^N k_i {e}_{i,4} (({{e}_i}^\top{\mathring y}_i){\underline{e}}_i - \underline{\mathring y}_i)  \\[1ex] 0& 0\end{bmatrix} \!\!\!
\end{split}
\end{equation}

\vspace{.2cm}
Using \eqref{:eq:ErrorObserverGeneral}, \eqref{:eq:innoGradGeneral} and \eqref{inno:gradRightmetric}, one deduces the error system
\begin{equation} \label{dyn:rightError}
\begin{split}
\dot{E} &=  - {\text{grad}}_1 \mathcal{C}(E,\mathring Y) \\
&= \mathbf{P}\!\left(\sum_{i=1}^N k_i \left(I_4 - e_i {e_i}^\top \right)  \mathring y_i  {e_i}^\top \right) E
\end{split}
\end{equation}
with $e_i$ considered as functions of $E$ and $\mathring y_i$, i.e. $e_i = \frac{E \mathring y_i}{|E \mathring y_i|}$.

\vspace{0.2cm}
For the sake of analysis purposes, the following assumption is introduced.
\begin{hypo}\label{hypo:observability}(Observability) The set $\{\mathring y_i \in \mathbb{RP}^3, i=1,\cdots,N\}$ satisfies one of the three following cases:
\begin{itemize}
\item Case 1 (at least 2 vectorial and 1 position measurements): There exist two different points $\mathring y_{i_1}$ and $\mathring y_{i_2}$ with $\mathring y_{i_1,4} =\mathring y_{i_2,4} = 0$ and one point $\mathring y_{j_1}$ such that $\mathring y_{j_1,4} \neq 0$.
\item Case 2 (at least 1 vectorial and 2 position measurements): There exist one point $\mathring y_{i_1}$ with $\mathring y_{i_1,4} = 0$ and two different points $\mathring y_{j_1}$ and $\mathring y_{j_2}$  (i.e., $\mathring y_{j_1} \neq \mathring y_{j_2}$) with $\mathring y_{j_1,4} \neq 0$ and $\mathring y_{j_2,4} \neq 0$. Furthermore, the vector $\mathring {\underline y}_{i_1}$ and the resultant vector $v_{j_{12}} := \mathring y_{j_2,4}\, \mathring {\underline y}_{j_1}-\mathring y_{j_1,4}\,\mathring {\underline y}_{j_2}$ are non-collinear.
\item Case 3 (at least 3 position measurements): There exist three different points $\mathring y_{j_1}$, $\mathring y_{j_2}$ and $\mathring y_{j_3}$ such that $\mathring y_{j_1,4} \neq 0$, $\mathring y_{j_2,4} \neq 0$ and $\mathring y_{j_3,4} \neq 0$. Furthermore, the resultant vectors $v_{j_{12}}:= \mathring y_{j_2,4}\, \mathring {\underline y}_{j_1}-\mathring y_{j_1,4}\,\mathring {\underline y}_{j_2}$, $v_{j_{23}}:= \mathring y_{j_3,4}\, \mathring {\underline y}_{j_2}-\mathring y_{j_2,4}\,\mathring {\underline y}_{j_3}$ and $v_{j_{31}}:= \mathring y_{j_1,4}\, \mathring {\underline y}_{j_3}-\mathring y_{j_3,4}\,\mathring {\underline y}_{j_1}$ are not all collinear.
\end{itemize}
\end{hypo}

From here, the first result of this paper is stated.
\begin{theo}\label{theorem:StabilityS3}
Consider the kinematics \eqref{system}. Consider the observer \eqref{systemObs} with the innovation term $\Delta(\hat X,Y)$ given by \eqref{inno:gradRightmetric}.
Assume that Assumption \ref{hypo:observability} is satisfied. Then, the equilibrium $E= I_4$ of the error system \eqref{dyn:rightError} is locally asymptotically stable.
\end{theo}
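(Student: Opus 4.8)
The plan is to exploit the gradient-flow structure of the autonomous error system. From \eqref{dyn:rightError} the error obeys $\dot E = -\mathrm{grad}_1\mathcal{C}(E,\mathring Y)$, so the right-invariant cost $\mathcal{C}(E,\mathring Y)$ of \eqref{:eq:aggregateCost} is the natural Lyapunov candidate: differentiating along trajectories gives $\dot{\mathcal{C}} = \langle \mathrm{grad}_1\mathcal{C},\dot E\rangle_E = -\|\mathrm{grad}_1\mathcal{C}\|_E^2\le 0$. Since $\mathcal{C}\ge 0$ and $\mathcal{C}(I_4,\mathring Y)=0$ (because $e_i=\mathring y_i$ at $E=I_4$, taking the representatives $\mathring y_i$ of unit norm), the identity is a global minimiser of $\mathcal{C}$ and hence a critical point of the flow. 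Local asymptotic stability will then follow from the standard fact that a non-degenerate (strict) local minimum of a gradient flow is locally asymptotically stable: near such a point $\mathcal{C}$ is a strict Lyapunov function, positive definite by a second-order Taylor expansion and with $\dot{\mathcal{C}}<0$ away from the isolated critical point. Equivalently, one may linearise the error dynamics and invoke Lyapunov's indirect method once the Hessian is shown to be positive definite.

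The crux is therefore to prove that the Hessian of $\mathcal{C}$ at $E=I_4$ is positive definite under Assumption \ref{hypo:observability}. First I would compute the first variation of the output estimates $e_i=E\mathring y_i/|E\mathring y_i|$ along a curve $E(s)=\exp(sA)$ with $A=\begin{bmatrix}\Omega_\times & V\\ 0 & 0\end{bmatrix}\in\mathfrak{se}(3)$; a direct computation (using $|\mathring y_i|=1$) gives $\delta e_i = (I_4-\mathring y_i\mathring y_i^\top)A\mathring y_i$, i.e. the projection of $A\mathring y_i$ onto the tangent space of the sphere at $\mathring y_i$. Because $I_4$ is a critical point with zero cost, the first-order term vanishes and the second-order term reduces to $\mathrm{Hess}\,\mathcal{C}(I_4)(A,A)=\sum_{i=1}^N k_i|\delta e_i|^2\ge 0$. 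Positive definiteness is thus equivalent to the \emph{infinitesimal observability} statement: the only $A\in\mathfrak{se}(3)$ with $A\mathring y_i$ collinear with $\mathring y_i$ for every $i$ is $A=0$.

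The main obstacle, and the only place where the geometry of the measurement set enters, is the verification of this infinitesimal observability in each of the three cases. Reading off the fourth coordinate of $A\mathring y_i$ (which is zero), the collinearity condition $A\mathring y_i\parallel\mathring y_i$ splits as follows: for a vectorial point ($\mathring y_{i,4}=0$) one gets $\Omega\times\underline{\mathring y}_i=0$, i.e. $\Omega\parallel\underline{\mathring y}_i$ (using that $\Omega\times\underline{\mathring y}_i\perp\underline{\mathring y}_i$); for a position point ($\mathring y_{i,4}\ne 0$) one gets $\Omega\times\underline{\mathring y}_i=-\mathring y_{i,4}V$. I would then eliminate $V$ between any two position points to obtain $\Omega\times v_{j_{kl}}=0$, with $v_{j_{kl}}$ the resultant vectors of Assumption \ref{hypo:observability}. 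In Case 1 the two non-collinear vectorial directions force $\Omega=0$, and the single position point then forces $V=0$; in Case 2 the non-collinearity of $\underline{\mathring y}_{i_1}$ and $v_{j_{12}}$ forces $\Omega=0$, whence $V=0$; in Case 3 the fact that $v_{j_{12}},v_{j_{23}},v_{j_{31}}$ are not all collinear forces $\Omega=0$, whence $V=0$. In every case $A=0$, so the Hessian is positive definite and the asserted local asymptotic stability follows.
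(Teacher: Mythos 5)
Your proof is correct, but it runs at a different order than the paper's. The paper keeps the entire analysis at the level of the group element: it shows that the right-invariant cost ${\cal V}(E)=\frac{1}{2}\sum_i k_i\bigl|\tfrac{E\mathring y_i}{|E\mathring y_i|}-\mathring y_i\bigr|^2$ vanishes only at $E=I_4$, by unpacking $E\mathring y_i=|E\mathring y_i|\,\mathring y_i$ into the pair of conditions \eqref{eqErRepe} on $(R_e,p_e)$ and running the three cases of Assumption \ref{hypo:observability} to force $R_e=I_3$ and $p_e=0$; local asymptotic stability is then read off from the gradient-flow structure. You instead work in the Lie algebra: you show the Hessian of ${\cal C}$ at $I_4$ is positive definite, which reduces to the infinitesimal observability statement that $A\mathring y_i\parallel\mathring y_i$ for all $i$ forces $A=0$. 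Your case analysis ($\Omega\times\underline{\mathring y}_i=0$ for vectorial points, $\Omega\times\underline{\mathring y}_i=-\mathring y_{i,4}V$ for position points, elimination of $V$ to obtain $\Omega\times v_{j_{12}}=0$, etc.) is precisely the linearisation of the paper's conditions on $(R_e-I_3,\,p_e)$ and closes under the same non-collinearity hypotheses, so the two case studies are structurally parallel. Each route buys something: the paper's zeroth-order argument establishes that $I_4$ is the \emph{unique global} minimum of the cost, the natural first step towards an almost-global result; your second-order argument establishes non-degeneracy, hence that $I_4$ is an isolated critical point with Hurwitz linearisation, yielding local \emph{exponential} stability. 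For the local claim as actually stated, your version is arguably the more airtight: a strict minimum of the potential gives Lyapunov stability of the gradient flow, but \emph{asymptotic} stability additionally needs the minimum to be an isolated critical point, a fact the paper leaves implicit and your Hessian computation supplies for free.
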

\begin{proof}
Since the right-hand side of \eqref{dyn:rightError} is a gradient flow of $\mathcal{C}$, in order to prove the local asymptotic stability of $E = I_4$, it suffices to prove that $\mathcal{C}(E, \mathring Y)$ is minimal when $E = I_4$.
Note that
\begin{equation}\label{costfunctionbis}
\mathcal{C}(E, \mathring Y) = {\cal V}(E) := \frac{1}{2} \sum_{i=1}^N k_i \left|\frac{E \mathring y_i}{|E \mathring y_i|} - \mathring y_i\right|^2.
\end{equation}
Let us prove that the function ${\cal V}(E)$ has a unique global minimum at $E = I_4$, {i.e.}
${\cal V}(E) = 0  \Leftrightarrow  E = I_4.$\\
First, it is straightforward to verify that ${\cal V}(I_4) = 0$. Denote
$E = \begin{bmatrix} R_e & p_e \\0 & 1 \end{bmatrix}$, with $R_e \in \mathrm{SO(3)}, p_e \in \mathbb{R}^3$. Now assuming that ${\cal V}(E) = 0$, we only have to prove that $E = I_4$ or,  equivalently, $R_e = I_3$ and $p_e= 0$. In view of \eqref{costfunctionbis} and ${\cal V}(E) = 0$, one deduces that
$ E \mathring y_i = |E\mathring y_i| \mathring y_i$, $\forall i$, {i.e.}
\begin{subequations}\label{eqErRepe}
\begin{empheq}[left=\empheqlbrace]{align}
R_e \mathring{\underline y}_i + p_e \mathring y_{i,4} &= \sqrt{\mathring y_{i,4}^2 + |R_e \mathring{\underline y}_i + p_e \mathring y_{i,4}|^2} \,\,\mathring{\underline y}_i \label{eqErRepe1}\\
\mathring y_{i,4} &= \sqrt{\mathring y_{i,4}^2 + |R_e \mathring{\underline y}_i + p_e \mathring y_{i,4}|^2} \,\, \mathring y_{i,4} \label{eqErRepe2}
\end{empheq}
\end{subequations}
Let us consider all the three cases of Assumption \ref{hypo:observability}.

\vspace{0.1cm}
\noindent $\bullet$ {{\em Case 1 of Assumption \ref{hypo:observability}:}} Since $\mathring y_{i_1,4} =\mathring y_{i_2,4} = 0$, one has $|\mathring {\underline y}_{i_1}| = |\mathring {\underline y}_{i_2}| = 1$. Then, one deduces from \eqref{eqErRepe1} that $R_e \mathring{\underline y}_{i_1} = \mathring{\underline y}_{i_1}$ and $R_e \mathring{\underline y}_{i_2} = \mathring{\underline y}_{i_2}$. These equalities and the non-collinearity of $\mathring{\underline y}_{i_1}$ and $\mathring{\underline y}_{i_2}$ allows one to deduce that $R_e = I_3$. Since $\mathring y_{j_1,4} \neq 0$, \eqref{eqErRepe2} implies that $|E \mathring y_{j_1} | =1$. As a consequence, one deduces from \eqref{eqErRepe1} that $p_e = 0$.

\vspace{0.1cm}
\noindent $\bullet$ {{\em Case 2 of Assumption \ref{hypo:observability}:}} Analogously to case 1, one deduces that $R_e \mathring{\underline y}_{i_1} = \mathring{\underline y}_{i_1}$. Now, since $\mathring y_{j_1,4} \neq 0$ and $\mathring y_{j_2,4} \neq 0$,  \eqref{eqErRepe2} implies that $|E \mathring y_{j_1} | = |E \mathring y_{j_2} |=1$. Then, from \eqref{eqErRepe1} one obtains
\begin{subequations}
\begin{empheq}[left=\empheqlbrace]{align}
(R_e - I_3)\mathring{\underline y}_{j_1} + p_e \mathring y_{j_1,4} &= 0 \notag\\
(R_e - I_3) \mathring{\underline y}_{j_2} + p_e \mathring y_{j_2,4} &= 0 \notag
\end{empheq}
\end{subequations}
From here, simple combination yields $R_e v_{j_{12}} = v_{j_{12}}$, with $v_{j_{12}}$ defined in Assumption \ref{hypo:observability}.
It is easily verified that $v_{j_{12}} \neq 0$ using the fact that $\mathring y_{j_1}$ and $\mathring y_{j_2}$ are non-collinear by assumption. Furthermore, since $\mathring {\underline y}_{i_1}$ and $v_{j_{12}}$ are non-collinear by assumption, relations $R_e \mathring{\underline y}_{i_1} = \mathring{\underline y}_{i_1}$ and $R_e v_{j_{12}} = v_{j_{12}}$ obtained previously imply that $R_e = I_3$. From here, it is straightforward to deduce that $p_e = 0$.

\vspace{0.1cm}
\noindent $\bullet$ {{\em Case 3 of Assumption \ref{hypo:observability}:}} Analogously to case 2, one deduces from \eqref{eqErRepe} that $|E \mathring y_{j_1}| = |E \mathring y_{j_2}|=|E \mathring y_{j_3}|=1$ and
\begin{subequations}
\begin{empheq}[left=\empheqlbrace]{align}
(R_e - I_3) \mathring{\underline y}_{j_1} + p_e \mathring y_{j_1,4} &= 0 \notag\\
(R_e - I_3) \mathring{\underline y}_{j_2} + p_e \mathring y_{j_2,4} &= 0 \notag\\
(R_e - I_3) \mathring{\underline y}_{j_3} + p_e \mathring y_{j_3,4} &= 0 \notag
\end{empheq}
\end{subequations}
From here, analogously to case 2 one deduces that $R_e v_{j_{12}} = v_{j_{12}}$, $R_e v_{j_{23}} = v_{j_{23}}$, $R_e v_{j_{31}} = v_{j_{31}}$, and that $v_{j_{12}}$, $v_{j_{23}}$ and $v_{j_{31}}$ are not null. Then, using the non-collinearity assumption of the vectors $v_{j_{12}}$, $v_{j_{23}}$ and $v_{j_{31}}$, it is easily deduced that $R_e = I_3$ and, consequently, that $p_e= 0$.
\end{proof}

\section{Observer design with velocity bias compensation}\label{sec:observerDesignBias}
The observer developed in the previous section will be extended in order to cope with the case where the measurement $A_y\in \mathfrak{se}(3)$ of the group velocity $A\in \mathfrak{se}(3)$ is corrupted by an unknown constant bias $b_A \in \mathfrak{se}(3)$, i.e. $A_y = A + b_A$.

\vspace{-0.1cm}
\begin{hypo}\label{hypo:observability_bias}
Assume that the following matrices $\mathring G \in \RR^{3\times 3}$ and $\mathring H \in \RR^{3\times 3}$ are full rank:
\[
\begin{split}
\mathring G := &\sum_{i=1}^N k_i (\mathring{\underline{y}}_{i \times})^2  \\
\mathring H := &\left(\sum_{i=1}^N k_i\mathring y_{i,4} \mathring{\underline{y}}_{i\times}\right) \mathring G^{-1} \left(\sum_{i=1}^N k_i\mathring y_{i,4} \mathring{\underline{y}}_{i\times}\right) \\
& - \sum_{i=1}^N k_i \mathring y_{i,4}^2 (I_3 - \mathring{\underline{y}}_i \mathring{\underline{y}}_i^\top )
\end{split}
\]
\end{hypo}

\vspace{-0.1cm}
The condition on the set $\{\mathring y_i \in \mathbb{RP}^3, i=1,\cdots,N\}$ evoked in Assumption \ref{hypo:observability} ensures that it is always possible to choose a set of parameters $\{k_i, i=1,\cdots,N\}$ such that $\mathring G$ and $\mathring H$ are full rank (i.e. invertible). Now, the second result of this paper is stated.

\vspace{-0.1cm}
\begin{propo}\label{theorem:Stability_bias}
Consider the observer system
\begin{subequations}\label{observer:Right_bias}
\begin{empheq}[left=\empheqlbrace]{align}
&\dot {\hat X}  =  \hat X(A_y - \hat b_A) - \Delta(\hat X,Y) \hat X \label{observer:Right_biasEq1}\\
&\dot {\hat b}_A =  -k_b\mathbf{P}\!\left(\hat X^\top\! \sum_{i=1}^N k_i \left(I_4 \!-\! e_i {e_i}^\top \right) \mathring y_i  {e_i}^\top \!\hat X^{-\top} \!\right) \label{observer:Right_biasEq2}\\
&\hat X(0) \in \mathrm{SE(3)}, \,\,\, \hat b_A(0) \in \mathfrak{se}(3) \notag
\end{empheq}
\end{subequations}
with $\Delta(\hat X,Y)$ given by \eqref{inno:gradRightmetric}. Assume that Assumptions \ref{hypo:observability} and \ref{hypo:observability_bias} are satisfied. Assume also that $A$ and $X$ are bounded for all time.~Then, the equilibrium $(E, \tilde b_A) = (I_4, 0)$ of the dynamics of $(E, \tilde b_A)$, with $\tilde b_A := b_A - \hat b_A$, is locally asymptotically stable.
\end{propo}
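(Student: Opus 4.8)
The plan is to convert the coupled estimator into an error system for $(E,\tilde b_A)$, run a single Lyapunov computation in which the bias law is engineered to cancel the cross terms, and then recover bias convergence from the observability data of Assumption \ref{hypo:observability_bias}.

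First I would derive the error dynamics. Differentiating $E=\hat X X^{-1}$ along \eqref{observer:Right_biasEq1} and \eqref{system}, using $\frac{d}{dt}X^{-1}=-AX^{-1}$ together with $A_y-\hat b_A-A=\tilde b_A$, gives
\[ \dot E = \hat X\tilde b_A X^{-1} - \Delta(\hat X,Y)E = \big(Ad_{\hat X}\tilde b_A - \Delta(E,\mathring Y)\big)E, \]
where I use that $e_i=E\mathring y_i/|E\mathring y_i|$ depends on $\hat X$ only through $E$, so that $\Delta(\hat X,Y)=\Delta(E,\mathring Y)$ via \eqref{inno:gradRightmetric}. Since $b_A$ is constant, $\dot{\tilde b}_A=-\dot{\hat b}_A=k_b\mathbf{P}(\hat X^\top M\hat X^{-\top})$ with $M:=\sum_{i=1}^N k_i(I_4-e_ie_i^\top)\mathring y_ie_i^\top$ and $\Delta(E,\mathring Y)=-\mathbf{P}(M)$. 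I would then verify that $(I_4,0)$ is an equilibrium: at $E=I_4$ one has $e_i=\mathring y_i$ and $(I_4-\mathring y_i\mathring y_i^\top)\mathring y_i=0$ for unit representatives, hence $M=0$, $\Delta=0$, and both right-hand sides vanish.

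The core step is the Lyapunov function $\mathcal L(E,\tilde b_A):=\mathcal C(E,\mathring Y)+\frac{1}{2k_b}\|\tilde b_A\|^2$. By the gradient identity behind \eqref{:eq:GradDerivation1} and the right-invariance of the metric, $\dot{\mathcal C}=\langle\Delta(E,\mathring Y),\,Ad_{\hat X}\tilde b_A-\Delta(E,\mathring Y)\rangle=\langle\Delta,Ad_{\hat X}\tilde b_A\rangle-\|\Delta\|^2$. For the bias term, since $\tilde b_A\in\mathfrak{se}(3)$, $\frac{1}{k_b}\langle\tilde b_A,\dot{\tilde b}_A\rangle=\langle\tilde b_A,\mathbf{P}(\hat X^\top M\hat X^{-\top})\rangle=\langle\tilde b_A,\hat X^\top M\hat X^{-\top}\rangle=\langle Ad_{\hat X}\tilde b_A,M\rangle=\langle Ad_{\hat X}\tilde b_A,\mathbf{P}(M)\rangle=-\langle Ad_{\hat X}\tilde b_A,\Delta\rangle$. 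The two cross terms coincide by symmetry of $\langle\cdot,\cdot\rangle$ and cancel, leaving $\dot{\mathcal L}=-\|\Delta(E,\mathring Y)\|^2\le0$; the transpose structure of \eqref{observer:Right_biasEq2} is precisely what produces this cancellation. Strict local minimality of $\mathcal C$ at $I_4$ (from the proof of Theorem \ref{theorem:StabilityS3} under Assumption \ref{hypo:observability}) together with the $\|\tilde b_A\|^2$ term make $\mathcal L$ locally positive definite, so $(I_4,0)$ is stable and trajectories starting in a sublevel set stay bounded; on such a set the boundedness of $A$ and $X$ makes $\hat X$, $\tilde b_A$, $\Delta$ and $\dot\Delta$ bounded. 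Since $\mathcal L$ is nonincreasing and bounded below, $\int_0^\infty\|\Delta\|^2\,dt<\infty$, and Barbalat's lemma yields $\Delta\to0$, hence $E\to I_4$ locally.

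The main obstacle is the remaining claim $\tilde b_A\to0$, which $\dot{\mathcal L}$ alone cannot deliver because it does not see $\tilde b_A$. Here Assumption \ref{hypo:observability_bias} enters: I would linearize the $(E,\tilde b_A)$ system about $(I_4,0)$, writing $E\approx I_4+\hat\varepsilon$ with $\hat\varepsilon\in\mathfrak{se}(3)$. The block of the linearization coming from $\Delta$ is the Hessian of $\mathcal C$ at $I_4$, whose attitude part is controlled by $\mathring G=\sum_{i} k_i(\mathring{\underline y}_{i\times})^2$, while eliminating the rotational direction produces the Schur-type complement $\mathring H$ governing the coupled position and bias directions; full rank of $\mathring G$ and $\mathring H$ is exactly the condition for this linear system to be asymptotically stable and for $\tilde b_A$ to be observable, yielding $\tilde b_A\to0$ and local asymptotic stability by the indirect method. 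I expect the genuinely delicate point to be that the coupling $Ad_{\hat X}\tilde b_A$ renders the linearization time-varying through $X(t)$, so that frozen-time Hurwitzness of the $\mathring G,\mathring H$ blocks is not by itself conclusive; closing the argument therefore requires a bona fide time-varying stability certificate---either transporting the bias coordinate to remove the $X(t)$ dependence, or a persistency-of-excitation argument in which $\mathring G,\mathring H$ certify sustained observability of $\tilde b_A$. This bias-observability step, not the Lyapunov cancellation, is the real crux.
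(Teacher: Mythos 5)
Your setup---the error dynamics $\dot E=(Ad_{\hat X}\tilde b_A-\Delta(E,\mathring Y))E$, the Lyapunov function $\mathcal C(E,\mathring Y)+\frac{1}{2k_b}\|\tilde b_A\|^2$, the cancellation of the cross terms produced by the transposed-adjoint structure of \eqref{observer:Right_biasEq2}, and the Barbalat argument giving $\Delta(E,\mathring Y)\to 0$---coincides with the paper's proof. The divergence, and the gap, lies in what you do afterwards. First, the step ``$\Delta\to 0$, hence $E\to I_4$ locally'' is not justified as written: $\Delta(\cdot,\mathring Y)$ is the projected gradient of $\mathcal C(\cdot,\mathring Y)$, and Assumption \ref{hypo:observability} only guarantees that $I_4$ is the unique global \emph{minimizer}, not that it is an isolated \emph{critical point}; a priori the critical set could be positive-dimensional through $I_4$, in which case $\Delta\to 0$ only forces convergence to that set. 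This is exactly where the paper spends Assumption \ref{hypo:observability_bias}: expanding $E$ to first order as $R_e=I_3+\eps_{r\times}$, $p_e=\eps_p$, the condition $\Delta(E,\mathring Y)=0$ becomes the linear system \eqref{S3biasConditionsApproxbis}, and the full rank of $\mathring G$ and $\mathring H$ forces $\eps_r=\eps_p=0$, i.e.\ $I_4$ is an isolated zero of $\Delta$, from which $E\to I_4$ follows. You instead reserve $\mathring G$ and $\mathring H$ for a linearization of the coupled $(E,\tilde b_A)$ system, which is both a misallocation of the hypothesis and, as you yourself concede, an argument you cannot close: that linearization is time-varying through $\hat X(t)$, and you end by saying a persistency-of-excitation certificate would be needed.

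That last difficulty is self-inflicted: no linearization of the coupled system and no excitation condition are required for $\tilde b_A\to 0$. Once $E\to I_4$ and $\Delta(E,\mathring Y)\to 0$ are established, apply Barbalat once more to $E$ itself (its second derivative is bounded on the invariant sublevel set, using the assumed boundedness of $A$ and $X$) to get $\dot E\to 0$; then the identity $Ad_{\hat X}\tilde b_A\,E=\dot E+\Delta(E,\mathring Y)E$ gives $Ad_{\hat X}\tilde b_A\to 0$, and since $\hat X$ and $\hat X^{-1}$ remain bounded this forces $\tilde b_A\to 0$. This is the paper's closing step, and it replaces your entire ``bias-observability crux'' with a few lines of algebra. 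So: keep your first half, but route Assumption \ref{hypo:observability_bias} into the isolated-critical-point argument for $\Delta$, and extract $\tilde b_A\to 0$ from the error equation rather than from a time-varying linear stability analysis.
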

\vspace{-0.1cm}
\begin{proof} It is easily verified that $\dot{\tilde b}_A = -\dot{\hat b}_A$ and
$\dot {\hat X} = \hat X(A + \tilde b_A) - \Delta(\hat X,Y) \hat X$. Then, one deduces
\begin{equation} \label{dyn:rightErrorS3_bias}
\dot{E} = \left(Ad_{\hat X}\tilde b_A - \Delta(E,\mathring Y)\right) E.
\end{equation}
Consider the candidate Lyapunov function
\begin{equation}\label{costfunctionbisformS3_bias}
{\cal V}_b(E,\tilde b_A) := \frac{1}{2} \sum_{i=1}^N k_i \left|\frac{E \mathring y_i}{|E \mathring y_i|} - \mathring y_i\right|^2 + \frac{1}{2k_b}\|\tilde b_A\|^2.
\end{equation}
Analogously to the proof of Theorem \ref{theorem:StabilityS3}, it can be verified that ${\cal V}_b(E,\tilde b_A)$ is locally positive-definite and has a unique global minimum at $(E,\tilde b_A) =(I_4,0)$, {i.e.} ${\cal V}_b(E,\tilde b_A) = 0  \Leftrightarrow  (E,\tilde b_A) =(I_4,0)$.

The time-derivative of ${\cal V}_b$ satisfies
\begin{equation}\label{dotcostfunctionbisformS3_bias}
\begin{array}{ll}
\dot{\cal V}_b  &\!\! =\left\langle -\sum_{i=1}^N k_i \left(I_4 \!-\! {e}_i {{e}_i}^\top \right)
\mathring y_i {{e}_i}^\top\!,  Ad_{\hat X}\tilde b_A -\Delta(E,\mathring Y)\right\rangle \\
& \quad\! - \frac{1}{k_b}\left\langle \dot {\hat b}_A, \tilde b_A \right\rangle\\[1ex]
&  \!\! = -\left\|\mathbf{P}\left(\sum_{i=1}^N k_i \left(I_4 - e_i e_i^\top \right)  \mathring y_i  e_i^\top \right)\right\|^2  \\[1ex]
&  \!\!= - \|\Delta(E,\mathring Y)\|^2. \!\!\!\!
\end{array} \!\!\!\!
\end{equation}
Since the dynamics of $(E, \tilde b_A)$ are not autonomous, LaSalle's theorem does not apply to deduce the convergence of $\dot{\cal V}_b$ to zero. Thus, the next step of the proof consists in proving that $\dot{\cal V}_b$ is (locally) uniformly continuous in order to deduce, by application of Barbalat's lemma, the convergence of $\dot{\cal V}_b$ to zero. To this purpose it suffices to prove that $\ddot{\cal V}_b$ is bounded. In view of \eqref{dotcostfunctionbisformS3_bias}, $\ddot{\cal V}_b$ is bounded if $\dot {e}_i$ ($i=1,\cdots, N$) are bounded, where (using  \eqref{dyn:rightErrorS3_bias} and the relation $e_i = \frac{E\mathring y_i}{|E\mathring y_i|}$)
\[
\dot {e}_i = (I_4 - e_i e_i^\top) (Ad_{\hat X}\tilde b_A - \Delta(E,\mathring Y)) e_i.
\]
According to Assumption \ref{hypo:observability_bias}, there exists at least one point $\mathring y_i$ such that its fourth component $\mathring y_{i,4}$ is not null. This indicates that for a given small number $\eps >0$ there exists $\delta_\eps >0$ such that if $|p_e|>\delta_\eps$ or $|\tilde b_A|>\delta_\eps$ then ${\cal V}_b(E,\tilde b_A) > \eps$. Therefore, there exists a small enough neighborhood $\mathfrak{B}_{\eps} \in \mathrm{SE(3)} \times \RR^3$ of the point $(I_4, 0)$ such that if $(E(0), \tilde b_A(0)) \in \mathfrak{B}_\eps$ then ${\cal V}_b(E(0),\tilde b_A(0)) < \eps$. Since ${\cal V}_b(E,\tilde b_A)$ is non-increasing, one has ${\cal V}_b(E(t),\tilde b_A(t)) < \eps, \forall t\leq 0$. This implies that $E$ and $\tilde b_A$ remain bounded. Since $X$ is bounded by assumption, one deduces from the boundedness of $E$ that $\hat X$ is also bounded, which in turn implies the boundedness of $\dot{E}$ and $\dot {{e}}_i$. This concludes the proof of (local) uniform continuity of $\dot{\cal V}_b$ and the convergence of $\dot{\cal V}_b$ to zero. One easily verifies that $(E,\tilde b_A) =(I_4,0)$ is an equilibrium of the error system. Let us prove the local stability of this equilibrium. To this purpose let us first prove that $\forall (E, \tilde b_A) \in \mathfrak{B}_\eps$: \vspace{-0.cm}
\[
\left\{
\begin{array}{lcl}
\dot{\cal V}_b(E,\tilde b_A) = 0 &\text{if}& E = I_4 \\
\dot{\cal V}_b(E,\tilde b_A) < 0 &\text{if}& E \neq I_4 \\
\end{array}
\right. \vspace{-0.2cm}
\]
Consider a first order approximation of $E =  \begin{bmatrix}R_e & p_e \\ 0 & 1 \end{bmatrix}$ around $I_4$ as
\[
\left\{
\begin{array}{lcl}
p_e &=& \eps_p \\
R_e &=& I_3 + \eps_{r\times}
\end{array}
\right.
\]
with $\eps_p, \eps_r \in \RR^3$. We only need to prove that
\[
\dot{\cal V}_b(E,\tilde b_A) = 0 \,\, \Leftrightarrow \,\, \eps_p =\eps_r = 0.
\]
Note that \eqref{dotcostfunctionbisformS3_bias} and \eqref{brokenInno} indicate that the relation $\dot{\cal V}_b = 0$ is equivalent to
\begin{equation}\label{S3biasConditionsApprox}
\left\{
\begin{array}{lcl}
\sum_{i=1}^N k_i \underline {e}_i \times \underline{\mathring y}_i  &=& 0 \\
\sum_{i=1}^N k_i(({{e}_i}^\top{\mathring y}_i)\underline {{e}}_i - \underline{\mathring y}_i){e}_{i,4} &=& 0
\end{array}
\right.
\end{equation}
In first order approximations, one verifies that
\[
E \mathring y_i = \begin{bmatrix} \mathring {\underline y}_{i} + \eps_{r\times} \mathring {\underline y}_{i} + \mathring y_{i,4}\eps_p \\ \mathring y_{i,4}\end{bmatrix}, \quad
|E \mathring y_i| = 1 + \mathring y_{i,4} \eps_p^\top\mathring {\underline y}_{i},
\]
and, thus,
\[
{{e}}_i =\frac{E \mathring y_i}{|E \mathring y_i|} = \begin{bmatrix} \mathring {\underline y}_{i} + \eps_{r\times} \mathring {\underline y}_{i} + \mathring y_{i,4}(I_3 - \mathring {\underline y}_{i}\mathring {\underline y}_{i}^\top)\eps_p \\[1ex] \mathring y_{i,4} - \mathring y_{i,4}^2 \eps_p^\top\mathring {\underline y}_{i}\end{bmatrix}.
\]
Therefore, in first order approximations the equalities in \eqref{S3biasConditionsApprox} can be rewritten as
\begin{subequations}\label{S3biasConditionsApproxbis}
\begin{empheq}[left=\empheqlbrace]{align}
&\!\!\left(\sum_{i=1}^N k_i\mathring y_{i,4} \mathring{\underline{y}}_{i\times}\right)\eps_p  = \left(\sum_{i=1}^N k_i (\mathring{\underline{y}}_{i \times})^2\right)\eps_r  \label{S3biasConditionsApproxbis1}\\
&\!\!\left(\!\sum_{i=1}^N k_i \mathring y_{i,4}^2 (I_3 - \mathring{\underline{y}}_i \mathring{\underline{y}}_i^\top)\!\right)\! \eps_p \!=\! \left(\sum_{i=1}^N k_i\mathring y_{i,4} \mathring{\underline{y}}_{i\times}\right)\eps_r \!\! \label{S3biasConditionsApproxbis2}
\end{empheq}
\end{subequations}
Since $\mathring G$ is full rank according to Assumption \ref{hypo:observability_bias}, it is deduced from \eqref{S3biasConditionsApproxbis1} that $\eps_r = \mathring G^{-1}\left(\sum_i k_i\mathring y_{i,4} \mathring{\underline{y}}_{i\times}\right)\eps_p$. This relation along with \eqref{S3biasConditionsApproxbis2} yields $\mathring H \eps_p = 0$. Since $\mathring H$ is also full rank by Assumption \ref{hypo:observability_bias}, it is deduced that $\eps_p = 0$ and, consequently, $\eps_r = 0$.

It remains to prove to convergence of $\tilde b_A$ to zero. From the convergence of $E$ to $I_4$ (proven previously) and  \eqref{dyn:rightErrorS3_bias}, the application of Barbalat's lemma yields the convergence of $\dot E$ to zero. Finally, Eq. \eqref{dyn:rightErrorS3_bias} and the convergence of $\dot E$ and of $\Delta(E,\mathring Y)$ to zero imply the convergence of $\tilde b_A$ to zero.
\end{proof}

\vspace{0.1cm}
The estimate $\hat b_A$ plays the role of integral correction for the error dynamics \eqref{dyn:rightErrorS3_bias}, allowing for the compensation of the unknown constant bias $b_A$. It may, however, grow arbitrarily large, resulting in slow convergence and sluggish dynamics of the error evolution. This leads us to replace hereafter the integral term $\hat b_A$, with dynamics given by \eqref{observer:Right_biasEq2}, by an ``anti-windup'' integrator similar to the one proposed in \cite{hua14,huasamson11}. More precisely, by decomposing $\hat b_A$ as $\hat b_A = \begin{bmatrix} (\hat b_\Omega)_\times & \hat b_V \\ 0 & 0 \end{bmatrix}$ with $\hat b_\Omega, \hat b_V \in \RR^3$, one rewrites the dynamics \eqref{observer:Right_biasEq2} of the estimated bias $\hat b_A$ as
\begin{subequations}\label{biasDyn}
\begin{empheq}[left=\empheqlbrace]{align}
\dot{\hat b}_\Omega &= k_b \hat R^\top \!\!\left(\Omega_\Delta  + \frac{1}{2} V_\Delta \times \hat p\right)  \notag\\
\dot{\hat b}_V &= k_ b \hat R^\top V_\Delta \notag
\end{empheq}
\end{subequations}
with $V_\Delta := \sum_{i=1}^N k_i {e}_{i,4} (({{e}_i}^\top{\mathring y}_i){\underline{e}}_i - \underline{\mathring y}_i)$ and $\Omega_\Delta := -\frac{1}{2} \sum_{i=1}^N k_i {\underline{e}}_i \times \underline{\mathring y}_i$.
From here, the following modified dynamics of  $\hat b_A$ (i.e. of $\hat b_\Omega$ and $\hat b_V$) are proposed: \vspace{-0.1cm}
\begin{subequations}\label{biasDynModified}
\!\!\!\!\begin{empheq}[left=\empheqlbrace]{align}
\dot{\hat b}_\Omega &=\! k_b \hat R^\top \!(\Omega_\Delta  \!\!+\!\! \frac{1}{2} V_\Delta\!\! \times \hat p)
 - \kappa_\Omega (\hat b_\Omega - \mathrm{sat}_{\delta_\Omega}(\hat b_\Omega)) \label{biasDynModifieda}\\
\dot{\hat b}_V &= k_ b \hat R^\top V_\Delta  - \kappa_V (\hat b_V - \mathrm{sat}_{\delta_V}(\hat b_V))\label{biasDynModifiedb}
\end{empheq}\!\!
\end{subequations}
with initial conditions $|b_\Omega(0)| \leq \delta_\Omega$ and $|b_V(0)|\leq \delta_V$; $\kappa_\Omega$ and $\kappa_V$ two positive numbers; $\delta_\Omega$ and $\delta_V$ two positive parameters associated with the classical functions $\mathrm{sat}_{\delta_\Omega}(\cdot)$ and $\mathrm{sat}_{\delta_\Omega}(\cdot)$ defined by $\mathrm{sat}_{\delta}(x) = x \min(1,\delta/|x|), \forall x\in \RR^3$.

\vspace{-0.1cm}
\begin{coro}\label{coro:biasAntiwindup}
Consider the observer \eqref{observer:Right_biasEq1}+\eqref{biasDynModified}. Assume that $\delta_\Omega$ and $\delta_V$ are chosen such that $|b_\Omega| \leq \delta_\Omega$ and $|b_V| \leq \delta_V$. Then, provided that all the assumptions of Proposition \ref{theorem:Stability_bias} are satisfied, the local asymptotic stability property of Proposition \ref{theorem:Stability_bias} holds.
\end{coro}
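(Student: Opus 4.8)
The plan is to reproduce the proof of Proposition~\ref{theorem:Stability_bias} almost verbatim, retaining the same Lyapunov function ${\cal V}_b(E,\tilde b_A)$ of \eqref{costfunctionbisformS3_bias}; the sole new ingredient is to check that the two anti-windup corrections in \eqref{biasDynModifieda}--\eqref{biasDynModifiedb} do not spoil the sign of $\dot{\cal V}_b$. First I would verify that $(E,\tilde b_A)=(I_4,0)$ remains an equilibrium: there one has $\hat b_\Omega=b_\Omega$ and $\hat b_V=b_V$, so the standing hypothesis $|b_\Omega|\leq\delta_\Omega$, $|b_V|\leq\delta_V$ yields $\mathrm{sat}_{\delta_\Omega}(b_\Omega)=b_\Omega$ and $\mathrm{sat}_{\delta_V}(b_V)=b_V$, whence both correction terms vanish identically and the modified observer shares the equilibrium of Proposition~\ref{theorem:Stability_bias}.

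Next I would recompute $\dot{\cal V}_b$. Writing $\tilde b_\Omega:=b_\Omega-\hat b_\Omega$, $\tilde b_V:=b_V-\hat b_V$, and noting that the modification only adds $-\kappa_\Omega(\hat b_\Omega-\mathrm{sat}_{\delta_\Omega}(\hat b_\Omega))$ to $\dot{\hat b}_\Omega$ and $-\kappa_V(\hat b_V-\mathrm{sat}_{\delta_V}(\hat b_V))$ to $\dot{\hat b}_V$, the same cancellation as in \eqref{dotcostfunctionbisformS3_bias} survives and produces two extra terms:
\[
\dot{\cal V}_b = -\|\Delta(E,\mathring Y)\|^2 + \tfrac{2\kappa_\Omega}{k_b}\langle \tilde b_\Omega,\, \hat b_\Omega-\mathrm{sat}_{\delta_\Omega}(\hat b_\Omega)\rangle + \tfrac{\kappa_V}{k_b}\langle \tilde b_V,\, \hat b_V-\mathrm{sat}_{\delta_V}(\hat b_V)\rangle .
\]
The key step is to show that these two terms are non-positive. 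I would observe that $\mathrm{sat}_\delta(\cdot)$ is precisely the Euclidean projection onto the closed ball $\{u\in\RR^3 : |u|\leq\delta\}$, which is convex and, by hypothesis, contains the true bias. The variational characterisation of the projection then gives $\langle \hat b_\Omega-\mathrm{sat}_{\delta_\Omega}(\hat b_\Omega),\, b_\Omega-\mathrm{sat}_{\delta_\Omega}(\hat b_\Omega)\rangle\leq 0$; substituting $\tilde b_\Omega=b_\Omega-\hat b_\Omega$ and expanding yields
\[
\langle \tilde b_\Omega,\, \hat b_\Omega-\mathrm{sat}_{\delta_\Omega}(\hat b_\Omega)\rangle \leq -\,|\hat b_\Omega-\mathrm{sat}_{\delta_\Omega}(\hat b_\Omega)|^2 \leq 0,
\]
and identically for the $V$-component. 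Hence $\dot{\cal V}_b\leq-\|\Delta(E,\mathring Y)\|^2\leq 0$, exactly the bound exploited in Proposition~\ref{theorem:Stability_bias}.

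From this point the argument is unchanged. The sublevel-set bound on ${\cal V}_b$ together with boundedness of $X$ gives boundedness of $\hat X$, $E$, $\tilde b_A$ and $\dot e_i$, hence uniform continuity of $\dot{\cal V}_b$, so Barbalat's lemma forces $\Delta(E,\mathring Y)\to 0$. The vanishing of $\Delta(E,\mathring Y)$ reproduces the stationarity conditions \eqref{S3biasConditionsApprox}, whose first-order form \eqref{S3biasConditionsApproxbis}, combined with the full-rank Assumptions~\ref{hypo:observability}--\ref{hypo:observability_bias}, forces $\eps_p=\eps_r=0$, so $E\to I_4$; convergence of $\dot E$ and of $\Delta(E,\mathring Y)$ to zero then gives $\tilde b_A\to 0$ through \eqref{dyn:rightErrorS3_bias}. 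The one genuinely new obstacle is the dissipativity check of the middle paragraph, which rests entirely on recognising $\mathrm{sat}_\delta$ as a convex projection and on the hypothesis $|b_\Omega|\leq\delta_\Omega$, $|b_V|\leq\delta_V$ placing the true bias inside the corresponding balls. When these inequalities are strict the saturation is inactive near the equilibrium and the observer there coincides with that of Proposition~\ref{theorem:Stability_bias}, giving the conclusion at once; the dissipativity argument is what is needed to also cover the boundary case.
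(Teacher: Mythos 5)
Your proposal is correct and follows essentially the same route as the paper: the paper's one-line proof also keeps ${\cal V}_b$ from \eqref{costfunctionbisformS3_bias} and shows the anti-windup terms are dissipative via the inequality $|b-\mathrm{sat}_\delta(b-\tilde b)|\leq|\tilde b|$ for $\delta\geq|b|$ (non-expansiveness of the projection onto the ball), which is equivalent to your variational characterisation of $\mathrm{sat}_\delta$ as a convex projection, and then concludes $\dot{\cal V}_b\leq-\|\Delta(E,\mathring Y)\|^2$ so that the argument of Proposition \ref{theorem:Stability_bias} carries over unchanged. Your write-up is in fact more detailed than the paper's (including the correct factor of $2$ on the $\Omega$-component coming from $\|(\tilde b_\Omega)_\times\|^2=2|\tilde b_\Omega|^2$).
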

\vspace{-0.2cm}
\begin{proof}
Based on the inequality $|b - \mathrm{sat}_\delta(b-\tilde b)|\leq |\tilde b|$ for all $\tilde b\in \RR^3$ and provided that $\delta \geq |b|$ (see e.g. \cite{hua14}), it can be easily proved that the time-derivative of $\mathcal V_b$ defined by \eqref{costfunctionbisformS3_bias} satisfies $\dot{\mathcal V}_b \leq - \|\Delta(E,\mathring Y)\|^2$. Therefore, the local asymptotic stability property given in Proposition \ref{theorem:Stability_bias} still holds when the dynamics of $\hat b_A$ given by \eqref{observer:Right_biasEq2} is replaced by \eqref{biasDynModified}.
\end{proof}

\section{Simulation results}\label{sec:simulation}
In this section, the performance of observer  \eqref{observer:Right_biasEq1}+\eqref{biasDynModified} is illustrated by simulations. The angular and translational velocity measurements are corrupted by the following constant biases:
\[\begin{split}
b_\Omega& = [-0.02 \quad 0.02 \quad 0.01]^\top \quad\mathrm{(rad/s)},\\
b_V &=[0.2 \quad -0.1 \quad 0.1]^\top \quad\mathrm{(m/s)}.
\end{split}
\]
We consider the three following cases where only three system outputs $y_i \in \mathbb{RP}^3$ of known inertial elements $\mathring y_i \in \mathbb{RP}^3$ ($i=1,2,3$) are available to measurement:
\begin{itemize}
\item Case 1: corresponds to Case 1 of Assumption \ref{hypo:observability}, in which two vectorial measurements $v_{1}, v_2 \in \RR^3$ and the position measurement $p_1 \in \RR^3$ of one feature point are available, where
    $v_1 = R^\top \mathring v_1$, $v_2 = R^\top \mathring v_2$, $p_1 = R^\top (\mathring p_1 - p)$, with $\mathring v_1 = [0\quad 0 \quad 1]^\top$, $\mathring v_2 = [\sqrt{3}/2\quad 1/2 \quad 0]^\top$ and $\mathring p_1 = [1\quad 0\quad 0]^\top$.
\item Case 2: corresponds to Case 2 of Assumption \ref{hypo:observability}, in which one vectorial measurement $v_{1} \in \RR^3$ and the position measurements $p_1, p_2 \in \RR^3$ of two feature points are available, where
    $v_1 = R^\top \mathring v_1$, $p_1 = R^\top (\mathring p_1 - p)$, $p_2 = R^\top (\mathring p_2 - p)$,
    with $\mathring v_1 = [0\quad 0 \quad 1]^\top$, $\mathring p_1 = [1\quad 0\quad 0]^\top$  and $\mathring p_2 = [-1/2\quad \sqrt{3}/2 \quad 0]^\top$.
\item Case 3: corresponds to Case 3 of Assumption \ref{hypo:observability}, in which the position measurements $p_1, p_2, p_3 \in \RR^3$ of three feature points are available, where
    $p_1 = R^\top (\mathring p_1 - p)$, $p_2 = R^\top (\mathring p_2 - p)$, $p_3 = R^\top (\mathring p_3 - p)$,
    with $\mathring p_1 = [1\quad 0\quad 0]^\top$, $\mathring p_2 = [-1/2\quad \sqrt{3}/2 \quad 0]^\top$ and $\mathring p_3 = [-1/2\quad -\sqrt{3}/2 \quad 0]^\top$.
\end{itemize}
Recall that Remark \ref{remMeasurement} explains how to transform a vector or a position of a feature point into a corresponding element of $\mathbb{RP}^3$.

The gains and parameters involved in the proposed observer are chosen as follows:
\[
\begin{split}
&k_1 = k_2 = k_3 = 2, \, k_b = 1, \\
&\kappa_\Omega=\kappa_V = 10, \, \delta_\Omega = 0.052,\, \delta_V =0.346.
\end{split}
\]
For each simulation run, the proposed filter is initialized at the origin (i.e. $\hat R=I_3, \hat p=0, \hat b_\Omega=0, \hat b_V =0$) while the true trajectories are initialized differently. Combined sinusoidal inputs are considered for both the angular and translational velocity inputs of the system kinematics. The rotation angle associated with the axis-angle representation is used to represent the attitude trajectory. One can observe from Figure \ref{fig1} that the observer trajectories converge to the true trajectories after a short transition period for all the three considered cases. Figure \ref{fig2} shows that the norms of the estimated velocity bias errors $|\tilde b_\Omega|$ and $|\tilde b_V|$ converge to zero, which means that the group velocity bias $b_A$ is also correctly estimated.

\begin{figure}[!t]\centering%
\psfrag{t (s)}{\scriptsize $t (s)$}%
\psfrag{x (m)}{\scriptsize $x (m)$}%
\psfrag{y (m)}{\scriptsize $y (m)$}%
\psfrag{z (m)}{\scriptsize $z (m)$}%
\psfrag{attitude (rad)}{\scriptsize $\text{Rot. angle (rad)}$}%
\includegraphics[width=\linewidth, height = 8cm]{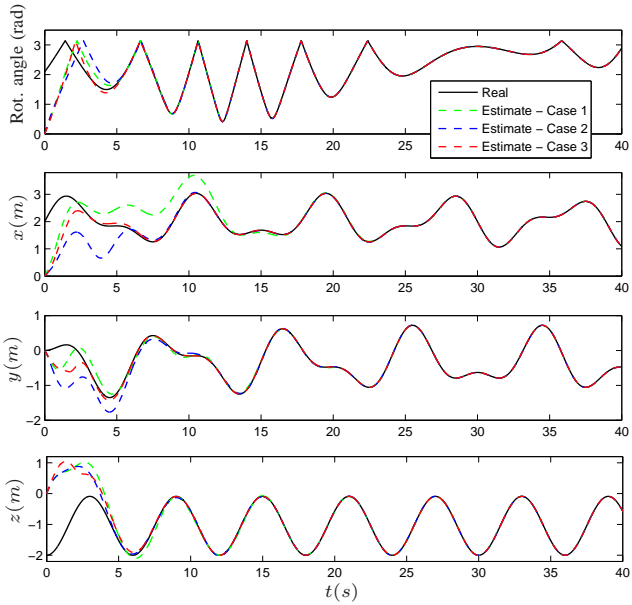} \vspace*{-0.7cm}
\caption{The rotation angle and the position tracking performance of the
proposed observer. Note that the dashed lines are the estimated trajectories (for Cases 1 (green), Case 2 (blue), Case 3 (red)) while
the solid line (black) represents the true trajectory.} \label{fig1} \vspace{-0.4cm}
\end{figure}

\begin{figure}[!t]\centering%
\psfrag{t (s)}{\scriptsize $t (s)$}%
\psfrag{norm bias Omega}{\scriptsize $|\tilde b_\Omega|$}%
\psfrag{norm bias V}{\scriptsize $|\tilde b_V|$}%
\includegraphics[width=\linewidth]{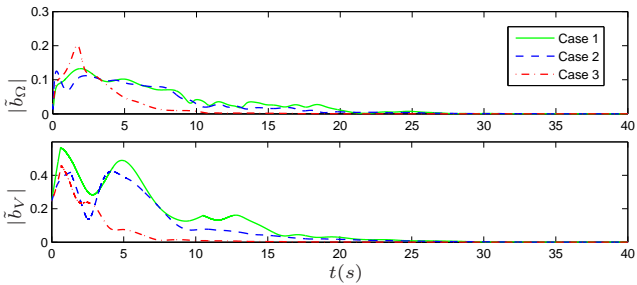} \vspace*{-0.7cm}
\caption{The norms of the estimated velocity bias errors $|\tilde b_\Omega|$ and $|\tilde b_V|$ vs. time.} \label{fig2} \vspace{-0.4cm}
\end{figure}

\section{Conclusions}\label{sec:conclusions}
In this paper, we propose a nonlinear observer on $SE(3)$ for full pose estimation that takes the system outputs on the real projective space $\mathbb{RP}^3$ directly as inputs. The observer derivation is based on a recent observer design technique directly on the output space, proposed in \cite{MahonyNOLCOS13}. An advantage with respect to our prior work \cite{hua11SE3} is that we can now incorporate in a unique observer different types of measurements such as vectorial measurements of known inertial vectors and position measurements of known feature points. The proposed observer is also extended on $\mathrm{SE(3)} \times \mathfrak{se}(3)$ so as to compensate for unknown additive constant bias in the velocity measurements. Rigorous stability analyses are equally provided. Excellent performance of the proposed observers are justified through simulations.

\section*{Acknowledgement}
This work was supported by the French {\em Agence Nationale de la Recherche} through the ANR ASTRID SCAR project ``Sensory Control of Aerial Robots'' (ANR-12-ASTR-0033) and the Australian Research Council through the ARC Discovery Project DP120100316 ``Geometric Observer Theory for Mechanical Control Systems''.


\end{document}